\newtheorem{thm}{Theorem}
\begin{document}

\title{Optimal geometric estimates for fractional Sobolev capacities}

\author{Jie Xiao}
\address{Department of Mathematics and Statistics, Memorial University, St. John's, NL A1C 5S7, Canada}
\email{jxiao@mun.ca}
\thanks{Research supported in part by: NSERC and URP of MUN, Canada.}


\subjclass[2010]{Primary 26D10, 31B15, 46E35; Secondary 52A38, 53A15, 53A30}



\keywords{Sharpness, analytic/geometric inequality, volume, fractional capacity/perimeter}

\begin{abstract}
This note develops certain sharp inequalities relating the fractional Sobolev capacity of a set to its standard volume and fractional perimeter.
\end{abstract}

\maketitle

Partially motivated by \cite{Xiao06, Xiao07}, this note discovers some optimal estimates linking the fractional Sobolev capacity of a set to its standard volume and fractional perimeter.

\subsection*{1. Fractional Sobolev capacities and their basic properties}

Let $0<\alpha<1$ and $C_0^\infty$ denote the class of all smooth functions with compact support in $\mathbb R^n$. Define the fractional Sobolev space (or the homogeneous $(\alpha,1,1)$-Besov space) $\dot{\Lambda}^{1,1}_\alpha$ as the completion of all functions $f\in
C^\infty_0$ with
$$
\|f\|_{\dot{\Lambda}_\alpha^{1,1}}=\int_{\mathbb R^n}\left(\int_{\mathbb R^n}{|f(x+h)-f(x)|}dx\right)\,\frac{dh}{|h|^{n+\alpha}}.
$$
Attached to $\dot{\Lambda}^{1,1}_\alpha$ is the following set-function:
$$
{\hbox{cap}}(K;
\dot{\Lambda}_\alpha^{1,1})=\inf\big\{\|f\|_{\dot{\Lambda}_\alpha^{1,1}}:\quad
f\in C^\infty_0\quad\&\quad f\ge 1_K\big\}\quad\forall\quad\hbox{compact}\quad K\subset\mathbb R^n.
$$
Here and henceforth, $1_E$ stands for the indicator of a set $E\subset\mathbb R^n$. This definition is extended to any set $E\subset\mathbb R^n$ via
$$
\hbox{cap}(E; \dot{\Lambda}_\alpha^{1,1})=\inf_{\hbox{open}\ O\supseteq E}\hbox{cap}(O;\dot{\Lambda}_\alpha^{1,1})=\inf_{\hbox{open}\ O\supseteq E}\left(\sup_{\hbox{compact}\ K\subseteq O}\hbox{cap}(K;\dot{\Lambda}_\alpha^{1,1})\right).
$$

The number $\hbox{cap}(E; \dot{\Lambda}_\alpha^{1,1})$ is called the fractional Sobolev capacity (or the homogeneous end-point Besov capacity) of $E$; see also \cite{Adams, Adams09, AdamsXiao, Xiao06, S}. Note that (cf. \cite{MS1, MS2, BBM1, BBM2, L1})
$$
\lim_{\alpha\to 0}\alpha \|f\|_{\dot{\Lambda}^{1,1}_\alpha}=2n\omega_n \|f\|_{L^1}\quad\&\quad \lim_{\alpha\to 1}(1-\alpha)\|f\|_{\dot{\Lambda}^{1,1}_\alpha}=\tau_n \|\nabla f\|_{L^1}
$$
where $\omega_n$ is the volume of the unit ball $\mathbb B^n$ of $\mathbb R^n$ and 
$$
\tau_n=\int_{\mathbb S^{n-1}}|\cos\theta|\,d\sigma
$$
with: $\mathbb S^{n-1}$ being the unit sphere of $\mathbb R^n$; $\theta$ being the angle deviation from the vertical direction; and $d\sigma$ being the standard area measure on $\mathbb S^{n-1}$. So, we have that for any compact $K\subset\mathbb R^n$,
$$
\lim_{\alpha\to 0}\alpha \hbox{cap}(K; \dot{\Lambda}_\alpha^{1,1})
=2n\omega_n V(K)\quad\&\quad \lim_{\alpha\to 1}(1-\alpha)\hbox{cap}(K; \dot{\Lambda}_\alpha^{1,1})=\tau_n \hbox{cap}(K; \dot{W}^{1,1})
$$
where
$$
V(K)=\int_K\,dx\quad\&\quad {\hbox{cap}}(K;
\dot{W}^{1,1})=\inf\big\{\|\nabla f\|_{L^1}:\quad
f\in C^\infty_0\quad\&\quad f\ge 1_K\big\}
$$
and
$$
\hbox{cap}(E; \dot{W}^{1,1})=\inf_{\hbox{open}\ O\supseteq E}\hbox{cap}(O;\dot{W}^{1,1})=\inf_{\hbox{open}\ O\supseteq E}\left(\sup_{\hbox{compact}\ K\subseteq O}\hbox{cap}(K;\dot{W}^{1,1})\right)\quad{\forall}\quad E\subset\mathbb R^n.
$$

\begin{thm}\label{t1} The nonnegative set-function $E\mapsto \hbox{cap}(E; \dot{\Lambda}_\alpha^{1,1})$ enjoys three essential properties below:

{\rm (i)} Homogeneity: $\hbox{cap}(rE; \dot{\Lambda}_\alpha^{1,1})=r^{n-\alpha}\hbox{cap}(E; \dot{\Lambda}_\alpha^{1,1}) \ \forall\  rE=\{rx:\ x\in E\}\subset\mathbb R^n\ \&\ r\in [0,\infty)$.

{\rm (ii)} Monotonicity: $E_1\subseteq E_2\subset\mathbb R^n\Rightarrow\hbox{cap}(E_1; \dot{\Lambda}_\alpha^{1,1})\le\hbox{cap}(E_2; \dot{\Lambda}_\alpha^{1,1})$.

{\rm (iii)} Upper-semi-continuity: $\lim_{j\to\infty}\hbox{cap}(K_j; \dot{\Lambda}_\alpha^{1,1})=\hbox{cap}(\cap_{j=1}^\infty K_j; \dot{\Lambda}_\alpha^{1,1})\ \forall$\ sequence\ $\{K_j\}_{j=1}^\infty$ of compact subsets of $\mathbb R^n$ with $K_1\supseteq K_2\supseteq K_3\supseteq\cdots$.
\end{thm}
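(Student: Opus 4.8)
Throughout this sketch I abbreviate $\hbox{cap}(E):=\hbox{cap}(E;\dot\Lambda_\alpha^{1,1})$. The plan is to read all three properties off the variational definition by bookkeeping with the admissible classes of test functions, preceded by one preliminary finiteness remark. First I would record that $\hbox{cap}(K)<\infty$ for every compact $K$: choosing $\phi\in C_0^\infty$ with $0\le\phi\le 1$ and $\phi\equiv 1$ on a closed ball $\overline{R\mathbb B^n}\supseteq K$, and splitting the defining double integral into $\{|h|\le 1\}$, where $|\phi(x+h)-\phi(x)|\le\|\nabla\phi\|_\infty|h|$ and $\int_{|h|\le 1}|h|^{1-n-\alpha}\,dh<\infty$ because $\alpha<1$, and $\{|h|>1\}$, where $\int_{\mathbb R^n}|\phi(x+h)-\phi(x)|\,dx\le 2\|\phi\|_{L^1}$ and $\int_{|h|>1}|h|^{-n-\alpha}\,dh<\infty$ because $\alpha>0$, one gets $\|\phi\|_{\dot\Lambda_\alpha^{1,1}}<\infty$, so $\phi$ is an admissible competitor for $K$. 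This is the only place the restriction $0<\alpha<1$ is actually used.

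For (ii) I would peel off the three layers of the definition. On compact sets $K_1\subseteq K_2$ one has $1_{K_1}\le 1_{K_2}$, so every $f\in C_0^\infty$ with $f\ge 1_{K_2}$ also satisfies $f\ge 1_{K_1}$, and passing to the infimum over the larger admissible class gives $\hbox{cap}(K_1)\le\hbox{cap}(K_2)$. On open sets $O_1\subseteq O_2$, the supremum defining $\hbox{cap}(O_1)$ runs over a subfamily of the compact sets used for $O_2$, hence is no larger. On arbitrary sets $E_1\subseteq E_2$, every open superset of $E_2$ is an open superset of $E_1$, so the infimum for $E_1$ is over a larger family, hence no larger. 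For (i) with $r>0$, the map $f\mapsto g:=f(\cdot/r)$ is a bijection between $\{f\in C_0^\infty:f\ge 1_K\}$ and $\{g\in C_0^\infty:g\ge 1_{rK}\}$, and the substitution $x=ry$, $h=rk$ in the double integral yields precisely $\|g\|_{\dot\Lambda_\alpha^{1,1}}=r^{n-\alpha}\|f\|_{\dot\Lambda_\alpha^{1,1}}$; taking infima gives the identity for compact sets. Since $K\mapsto rK$ and $O\mapsto rO$ are inclusion-preserving bijections of the families of compact, respectively open, sets, the identity then propagates through the supremum to open sets and through the infimum to all sets. For $r=0$ and $E\ne\emptyset$ one has $rE=\{0\}$, and by (ii) together with the case $r>0$, $\hbox{cap}(\{0\})\le\hbox{cap}(\rho\mathbb B^n)=\rho^{n-\alpha}\hbox{cap}(\mathbb B^n)\to 0$ as $\rho\to 0^+$ (the factor $\hbox{cap}(\mathbb B^n)$ being finite by the preliminary remark), so $\hbox{cap}(\{0\})=0=0^{n-\alpha}\hbox{cap}(E)$ because $n-\alpha>0$; the case $E=\emptyset$ is trivial.

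For (iii) I would set $K=\bigcap_{j=1}^\infty K_j$, which is compact. By (ii) the sequence $\hbox{cap}(K_j)$ is nonincreasing and bounded below by $\hbox{cap}(K)$, hence converges with limit $\ge\hbox{cap}(K)$; the substance is the reverse inequality, which I expect to be the only genuinely nontrivial step. Given $\epsilon>0$, the preliminary remark lets me pick $f\in C_0^\infty$ with $f\ge 1_K$ and $\|f\|_{\dot\Lambda_\alpha^{1,1}}<\hbox{cap}(K)+\epsilon$; note $f\ge 1_K$ already forces $f\ge 0$ everywhere. The issue is that $f$ is only guaranteed to be $\ge 1$ on $K$, not on the larger sets $K_j$, so for $\delta\in(0,1)$ I would replace $K$ by the open super-level set $U=\{x:f(x)>1-\delta\}$, which contains $K$. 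Then the sets $K_j\setminus U$ are nested compacts with $\bigcap_j(K_j\setminus U)=K\setminus U=\emptyset$, hence eventually empty, i.e. $K_j\subseteq U$ for all large $j$; for such $j$ the function $(1-\delta)^{-1}f$ lies in $C_0^\infty$ and satisfies $(1-\delta)^{-1}f\ge 1_{K_j}$ (it exceeds $1$ on $K_j\subseteq U$ and is $\ge 0$ elsewhere), whence $\hbox{cap}(K_j)\le(1-\delta)^{-1}(\hbox{cap}(K)+\epsilon)$. Letting $\epsilon\to 0^+$ and then $\delta\to 0^+$ would give $\lim_j\hbox{cap}(K_j)\le\hbox{cap}(K)$, which yields (iii).

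The main obstacle is exactly this last maneuver in (iii): trading the compact $K$ for an open neighborhood on which a near-optimal test function is bounded below, trapping some $K_j$ inside it via the nested-compacts (finite intersection) argument, and absorbing the harmless factor $(1-\delta)^{-1}$. Everything else reduces to routine comparison of admissible classes, a change of variables, and the elementary convergence estimates above.
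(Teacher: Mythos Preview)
Your argument is correct and follows essentially the same route as the paper: (i) via the scaling identity for the seminorm, (ii) by comparing admissible classes, and (iii) by choosing a near-optimal $f\in C_0^\infty$ for $K$, trapping $K_j$ in the super-level set $\{f>1-\delta\}$ for large $j$, and testing with $(1-\delta)^{-1}f$. Your version is in fact more careful than the paper's in a few places (the finiteness remark, the explicit treatment of $r=0$, and the propagation of (i)--(ii) through the open- and general-set layers of the definition), but these are refinements of the same proof rather than a different approach.
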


\begin{proof} (i) follows from 
$$
\|f(r\cdot)\|_{\dot{\Lambda_\alpha^{1,1}}}=r^{\alpha-n}\|f\|_{\dot{\Lambda_\alpha^{1,1}}}\quad\forall\quad r\in [0,\infty).
$$
and the definition of $\hbox{cap}(\cdot,\dot{\Lambda_\alpha^{1,1}})$.

(ii) follows from the definition of $\hbox{cap}(\cdot,\dot{\Lambda_\alpha^{1,1}})$.

(iii) follows from a careful treatment. Suppose $\{K_j\}_{j=1}^\infty$ is a decreasing sequence of compact subsets of $\mathbb R^n$. Then
$K=\cap_{j=1}^\infty K_j$ is compact. Following the argument for \cite[Theorem 2.2(iv)]{HKM}, for any $\epsilon\in (0,1)$ there is a function $f\in C_0^\infty$ such that
$$
f\ge 1_{K}\quad\&\quad \|f\|_{\dot{\Lambda}_\alpha^{1,1}}
< \hbox{cap}(K; \dot{\Lambda}_\alpha^{1,1})+\epsilon.
$$
Note that if $j$ is sufficiently large then $K_j$ is contained in the compact set $\{x\in\mathbb R^n: f(x)\ge 1-\epsilon\}$. So, an application of (ii) and the definition of $\hbox{cap}(\cdot; \dot{\Lambda}_\alpha^{1,1})$ derives
$$
\lim_{j\to\infty}\hbox{cap}(K_j; \dot{\Lambda}_\alpha^{1,1})\le \hbox{cap}\big(\{x\in\mathbb R^n: f(x)\ge 1-\epsilon\}; \dot{\Lambda}_\alpha^{1,1}\big)\le\frac{\|f\|_{\dot{\Lambda}_\alpha^{1,1}}}{1-\epsilon}\le \frac{\hbox{cap}(K; \dot{\Lambda}_\alpha^{1,1})+\epsilon}{1-\epsilon}.
$$
Upon letting $\epsilon\to 0$ and using (ii) again, we get
$$
\hbox{cap}(K; \dot{\Lambda}_\alpha^{1,1})\le\lim_{j\to\infty}\hbox{cap}(K_j; \dot{\Lambda}_\alpha^{1,1})\le \hbox{cap}(K; \dot{\Lambda}_\alpha^{1,1}),
$$
as desired.
\end{proof}

For any set $E\subset\mathbb R^n$, let $E^c=\mathbb R^n\setminus E$ and compute 
$$
\|1_E\|_{\dot{\Lambda}_\alpha^{1,1}}=2\int_{E}\int_{E^c}\frac{dxdy}{|x-y|^{n+\alpha}}\equiv 2P_\alpha(E).
$$ 
whose half $P_\alpha(E)$ is called the fractional $\alpha$-perimeter; see e.g. \cite{FMM, L2}.
Notice that 
$$
\lim_{\alpha\to 0}\alpha P_\alpha(E)=n\omega_n V(E)\quad\&\quad \lim_{\alpha\to 1}(1-\alpha)P_\alpha(E)=2^{-1}\tau_n P(E)
$$
where $P(E)$ is the perimeter of $E$. So, we get an extension of \cite[Lemma 2.2.5]{Maz} from the limit $\alpha\to 1$ to the intermediate value $0<\alpha<1$ that connects the fractional Sobolev capacity and the fractional perimeter.

\begin{thm}
\label{t2} If $K$ is a compact subset of $\mathbb R^n$, then 
$$
\hbox{cap}(K; \dot{\Lambda}_\alpha^{1,1})=2\inf_{O\in\mathsf{O}^\infty(K)}P_\alpha(O)
$$ 
where $\mathsf{O}^\infty(K)$ denotes the class of all open sets with $C^\infty$  boundary that contain $K$.
\end{thm}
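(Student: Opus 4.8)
The plan is to sandwich $\hbox{cap}(K;\dot\Lambda_\alpha^{1,1})$ between $2\inf_{O\in\mathsf O^\infty(K)}P_\alpha(O)$ from below and above. (Throughout, $\mathsf O^\infty(K)$ is understood as the class of \emph{bounded} open sets with $C^\infty$ boundary containing $K$.) The common engine is the coarea identity
$$\|f\|_{\dot\Lambda_\alpha^{1,1}}=\int_{\mathbb R^n}\!\int_{\mathbb R^n}\frac{|f(x)-f(y)|}{|x-y|^{n+\alpha}}\,dx\,dy=2\int_0^\infty P_\alpha\big(\{f>t\}\big)\,dt\qquad(f\ge 0),$$
where the first equality is the change of variables $y=x+h$ in the definition of $\|\cdot\|_{\dot\Lambda_\alpha^{1,1}}$, and the second comes from the layer-cake formula $|f(x)-f(y)|=\int_0^\infty|1_{\{f>t\}}(x)-1_{\{f>t\}}(y)|\,dt$ together with Tonelli's theorem, upon recognizing that $\int\!\int|1_{\{f>t\}}(x)-1_{\{f>t\}}(y)|\,|x-y|^{-n-\alpha}\,dx\,dy=\|1_{\{f>t\}}\|_{\dot\Lambda_\alpha^{1,1}}=2P_\alpha(\{f>t\})$.

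\emph{Upper bound.} Fix $O\in\mathsf O^\infty(K)$; we may assume $P_\alpha(O)<\infty$. Let $\varphi_\delta\ge 0$ be a standard mollifier and set $f_\delta:=1_O*\varphi_\delta\in C_0^\infty$ (compactly supported since $O$ is bounded). Then $0\le f_\delta\le 1$ and $f_\delta\equiv 1$ on a neighborhood of $K$ once $\delta<\hbox{dist}(K,O^c)$, so $f_\delta\ge 1_K$ is admissible in the definition of $\hbox{cap}(K;\dot\Lambda_\alpha^{1,1})$. Writing $f_\delta(x+h)-f_\delta(x)=\int\varphi_\delta(z)\big(1_O(x+h-z)-1_O(x-z)\big)\,dz$, taking absolute values, applying Tonelli, and using translation invariance of $dx$ together with $\|\varphi_\delta\|_{L^1}=1$ yields $\|f_\delta\|_{\dot\Lambda_\alpha^{1,1}}\le\|1_O\|_{\dot\Lambda_\alpha^{1,1}}=2P_\alpha(O)$. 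Hence $\hbox{cap}(K;\dot\Lambda_\alpha^{1,1})\le 2P_\alpha(O)$, and taking the infimum over $O$ gives $\hbox{cap}(K;\dot\Lambda_\alpha^{1,1})\le 2\inf_{O\in\mathsf O^\infty(K)}P_\alpha(O)$.

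\emph{Lower bound.} Let $f\in C_0^\infty$ with $f\ge 1_K$. By the coarea identity and $P_\alpha\ge 0$,
$$\|f\|_{\dot\Lambda_\alpha^{1,1}}=2\int_0^\infty P_\alpha\big(\{f>t\}\big)\,dt\ \ge\ 2\int_0^1 P_\alpha\big(\{f>t\}\big)\,dt.$$
For each $t\in(0,1)$ one has $K\subseteq\{f\ge 1\}\subseteq\{f>t\}$, and $\{f>t\}\subseteq\hbox{supp}\,f$ is bounded and proper (since $f\in C_0^\infty$); by Sard's theorem, for a.e. $t\in(0,1)$ the number $t$ is a regular value of $f$, so $\partial\{f>t\}=\{f=t\}$ is a nonempty compact $C^\infty$ hypersurface and $\{f>t\}\in\mathsf O^\infty(K)$. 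Thus $P_\alpha(\{f>t\})\ge\inf_{O\in\mathsf O^\infty(K)}P_\alpha(O)$ for a.e.\ $t\in(0,1)$, whence $\|f\|_{\dot\Lambda_\alpha^{1,1}}\ge 2\inf_{O\in\mathsf O^\infty(K)}P_\alpha(O)$; taking the infimum over admissible $f$ gives $\hbox{cap}(K;\dot\Lambda_\alpha^{1,1})\ge 2\inf_{O\in\mathsf O^\infty(K)}P_\alpha(O)$. Combined with the upper bound, this proves the identity.

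The substantive ingredient is the coarea identity; once it is available, the lower bound is immediate and the upper bound is a routine mollification. The steps deserving care are: the interchange of the $t$-integral with the $(x,y)$-integral (valid by Tonelli, the integrand being nonnegative) together with the layer-cake representation of $|f(x)-f(y)|$; the appeal to Sard's theorem guaranteeing that almost every superlevel set $\{f>t\}$ is a genuine competitor with $C^\infty$ boundary — which also uses the elementary facts that $\partial\{f>t\}=\{f=t\}$ at a regular value and that $\{f>t\}$ is nonempty, bounded, and $\ne\mathbb R^n$ because $f$ has compact support; and, in the upper bound, that $f_\delta=1$ on $K$ for small $\delta$ and that convolution with an $L^1$-normalized kernel does not increase the $\dot\Lambda_\alpha^{1,1}$-seminorm. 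The only modeling caveat is the standing restriction to bounded $O$ in $\mathsf O^\infty(K)$, needed for the right-hand infimum to be meaningful.
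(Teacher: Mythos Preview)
Your proof is correct. The lower bound is essentially identical to the paper's: both use the coarea identity $\|f\|_{\dot\Lambda_\alpha^{1,1}}=2\int_0^\infty P_\alpha(\{f>t\})\,dt$ and the fact that the superlevel sets $\{f>t\}$, $t\in(0,1)$, are competitors in $\mathsf O^\infty(K)$; you are simply more explicit than the paper in invoking Sard's theorem to guarantee the $C^\infty$ boundary for almost every $t$.

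The upper bound is where the approaches genuinely diverge. The paper obtains $\hbox{cap}(\overline O;\dot\Lambda_\alpha^{1,1})\le 2P_\alpha(O)$ by citing an external result (\cite[Theorem 3.1]{HV}) and then uses monotonicity. You instead give a direct, self-contained argument: mollify $1_O$ to $f_\delta=1_O*\varphi_\delta\in C_0^\infty$, observe that $f_\delta\ge 1_K$ for $\delta$ small, and exploit that convolution with an $L^1$-normalized kernel does not increase the $\dot\Lambda_\alpha^{1,1}$-seminorm (via Tonelli and translation invariance), giving $\|f_\delta\|_{\dot\Lambda_\alpha^{1,1}}\le\|1_O\|_{\dot\Lambda_\alpha^{1,1}}=2P_\alpha(O)$. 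Your route is more elementary and avoids the external reference at the cost of a short extra computation; the paper's is terser but outsources the key inequality. Your explicit restriction to bounded $O$ is also appropriate and is implicit in the paper's usage.
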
 
\begin{proof} Given a compact $K\subset\mathbb R^n$. On the one hand, if $f\in C_0^\infty$ and $f\ge 1_K$ then 
$$
K\subset\{x\in\mathbb R^n: f(x)>t\}\quad\forall\quad t\in (0,1),
$$
and hence an application of the generalized co-area formula in \cite{Vi} (cf. \cite[Theorem 1.2]{Xiao06} for another version of the co-area formula of dimension $n-\alpha$) gives
$$
\|f\|_{\dot{\Lambda}_\alpha^{1,1}}=2\int_{0}^\infty P_\alpha\big(\{x\in\mathbb R^n: f(x)>t\}\big)\,dt\ge 2\int_0^1 P_\alpha\big(\{x\in\mathbb R^n: f(x)>t\}\big)\,dt\ge 2\inf_{O\in \mathsf{O}^\infty(K)}P_\alpha(O).
$$
This, along with the definition of $\hbox{cap}(K; \dot{\Lambda}_\alpha^{1,1})$, implies
$$
\hbox{cap}(K; \dot{\Lambda}_\alpha^{1,1})\ge 2\inf_{O\in \mathsf{O}^\infty(K)}P_\alpha(O).
$$

On the other hand, according to Theorem \ref{t1}(ii) and \cite[Theorem 3.1]{HV} we have 
$$
\hbox{cap}(K; \dot{\Lambda}_\alpha^{1,1})\le \hbox{cap}(\overline{O}; \dot{\Lambda}_\alpha^{1,1})\le 2P_\alpha(O)\quad\forall\quad O\in \mathsf{O}^\infty(K),
$$
whence 
$$
\hbox{cap}(K; \dot{\Lambda}_\alpha^{1,1})\le 2\inf_{O\in \mathsf{O}^\infty(K)}P_\alpha(O).
$$
Therefore, the desired formula for $\hbox{cap}(K; \dot{\Lambda}_\alpha^{1,1})$ follows.
\end{proof}

As an immediate consequence of Theorem \ref{t2}, we have
$$
\lim_{\alpha\to 0}\alpha \hbox{cap}(K; \dot{\Lambda}_\alpha^{1,1})
=2n\omega_n V(K)\quad\&\quad \lim_{\alpha\to 1}(1-\alpha)\hbox{cap}(K; \dot{\Lambda}_\alpha^{1,1})=\tau_n P(K)\quad\forall\ \ \hbox{compact}\ \ K\subset\mathbb R^n.
$$
\subsection*{2. Fractional Sobolev inequalities and their geometric forms} The next analytic-geometric assertion indicates that the fractional Sobolev capacity plays a decisive role in improving the fractional isoperimetric inequality \cite[(4.2)]{FS}.

\begin{thm}
\label{t3} Let $
\kappa_{n,\alpha}={\omega_n^\frac{n-\alpha}{n}}{\big(2P_\alpha(\mathbb B^n)\big)^{-1}}.$ Then:

{\rm(i)} The analytic inequality
\begin{equation}\label{eq1}
\|f\|_{L^\frac{n}{n-\alpha}}\le \kappa_{n,\alpha}\left(\int_0^\infty \big(\hbox{cap}(\{x\in\mathbb R^n: |f(x)|\ge t\}; \dot{\Lambda}_\alpha^{1,1})\big)^\frac{n}{n-\alpha}\,dt^\frac{n}{n-\alpha}\right)^\frac{n-\alpha}{n}\ \forall\ f\in C_0^\infty
\end{equation}
is equivalent to the geometric inequality
\begin{equation}\label{eq2}
\big(V(O)\big)^\frac{n-\alpha}{n}\le \kappa_{n,\alpha}\, \hbox{cap}(\overline{O};\dot{\Lambda}_\alpha^{1,1})\quad \forall\ \hbox{bounded\ domain}\ \ O\subset\mathbb R^n\ \hbox{with}\ C^\infty\ \hbox{boundary}\ \partial O.
\end{equation}
Moreover, both $(\ref{eq1})$ and $(\ref{eq2})$ are true and sharp.

{\rm(ii)} The analytic inequality
\begin{equation}\label{eq3}
\left(\int_0^\infty \big(\hbox{cap}(\{x\in\mathbb R^n: |f(x)|\ge t\}; \dot{\Lambda}_\alpha^{1,1})\big)^\frac{n}{n-\alpha}\,dt^\frac{n}{n-\alpha}\right)^\frac{n-\alpha}{n}\le\|f\|_{\dot{\Lambda}^{1,1}_\alpha}\ \forall\ f\in C_0^\infty
\end{equation}
is equivalent to the geometric inequality
\begin{equation}\label{eq4}
\hbox{cap}(\overline{O};\dot{\Lambda}_\alpha^{1,1})\le 2 P_\alpha({O})\quad \forall\ \hbox{bounded\ domain}\ \ O\subset\mathbb R^n\ \hbox{with}\ C^\infty\ \hbox{boundary}\ \ \partial O.
\end{equation}
Moreover, both (\ref{eq3}) and (\ref{eq4}) are true and sharp.
\end{thm}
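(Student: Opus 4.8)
Set $\beta=\tfrac{n}{n-\alpha}>1$ and keep the convention $dt^\beta=\beta t^{\beta-1}\,dt$ of the statement. The plan is to read each part as a Maz'ya-type equivalence: a capacitary functional inequality for $f\in C_0^\infty$ is equivalent, \emph{with the same constant}, to its geometric (isocapacitary) analogue for bounded smooth domains. One direction comes from slicing $f$ through the layer-cake identity $\|f\|_{L^\beta}^\beta=\int_0^\infty V(\{|f|\ge t\})\,dt^\beta$ and the co-area formula $\|f\|_{\dot\Lambda_\alpha^{1,1}}=2\int_0^\infty P_\alpha(\{|f|>t\})\,dt$ of \cite{Vi}; the other from testing the functional inequality on smooth approximations of indicators. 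Truth and sharpness are then extracted from the fractional isoperimetric inequality \cite[(4.2)]{FS} and from the identity $\hbox{cap}(\overline{\mathbb B^n};\dot\Lambda_\alpha^{1,1})=2P_\alpha(\mathbb B^n)$, which itself follows from Theorem \ref{t2}: for every $W\in\mathsf O^\infty(\overline{\mathbb B^n})$ one has $P_\alpha(W)\ge c_{n,\alpha}V(W)^{1/\beta}\ge c_{n,\alpha}\omega_n^{1/\beta}=P_\alpha(\mathbb B^n)$ by \cite[(4.2)]{FS} with $c_{n,\alpha}=P_\alpha(\mathbb B^n)\omega_n^{-1/\beta}=(2\kappa_{n,\alpha})^{-1}$, and the sets $(1+\epsilon)\mathbb B^n$ show this bound is attained in the limit.

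For part (i), I would first note that (\ref{eq2}) is, via Theorem \ref{t2}, merely a restatement of \cite[(4.2)]{FS}: from $\hbox{cap}(\overline O;\dot\Lambda_\alpha^{1,1})=2\inf_{W\in\mathsf O^\infty(\overline O)}P_\alpha(W)$ and $P_\alpha(W)\ge c_{n,\alpha}V(W)^{1/\beta}\ge c_{n,\alpha}V(O)^{1/\beta}$ one gets (\ref{eq2}) at once, with equality at $O=\mathbb B^n$; the same computation works verbatim when $O$ is merely a bounded open set with $C^\infty$ boundary (finitely many components). Then (\ref{eq2})$\Rightarrow$(\ref{eq1}) is immediate: by Sard's theorem $\{|f|>t\}$ is for a.e.\ $t>0$ such a set with $\overline{\{|f|>t\}}=\{|f|\ge t\}$, so $V(\{|f|\ge t\})^{1/\beta}\le\kappa_{n,\alpha}\,\hbox{cap}(\{|f|\ge t\};\dot\Lambda_\alpha^{1,1})$ for a.e.\ $t$, and raising to the power $\beta$ and integrating against $dt^\beta$ gives (\ref{eq1}). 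Conversely, for a bounded smooth domain $O$ I would take $f_j\in C_0^\infty$ with $1_{\overline O}\le f_j\le 1$, $(f_j)$ decreasing, $\mathrm{supp}\,f_j\downarrow\overline O$, so $\{f_j\ge t\}\downarrow\overline O$ for each $t\in(0,1)$; Theorem \ref{t1}(iii) gives $\hbox{cap}(\{f_j\ge t\};\dot\Lambda_\alpha^{1,1})\to\hbox{cap}(\overline O;\dot\Lambda_\alpha^{1,1})$, dominated convergence (integrands supported in $t\in(0,1)$, bounded by the $j=1$ term) turns the capacitary integral into $\hbox{cap}(\overline O;\dot\Lambda_\alpha^{1,1})^\beta$ since $\int_0^1 dt^\beta=1$, and $\|f_j\|_{L^\beta}^\beta\to V(O)$; letting $j\to\infty$ in (\ref{eq1}) recovers (\ref{eq2}). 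Hence (\ref{eq1})$\Leftrightarrow$(\ref{eq2}) with the same constant, both are true, and both are sharp: (\ref{eq2}) has equality for $O=\mathbb B^n$, and in fact (\ref{eq1}) is an \emph{equality} for every radially decreasing $f\in C_0^\infty$, since then each $\{|f|\ge t\}$ is a ball and $V(\{|f|\ge t\})^{1/\beta}=\kappa_{n,\alpha}\,\hbox{cap}(\{|f|\ge t\};\dot\Lambda_\alpha^{1,1})$ for a.e.\ $t$ by Theorem \ref{t1}(i) and the ball identity.

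For part (ii), the inequality (\ref{eq4}) is precisely the bound $\hbox{cap}(\overline O;\dot\Lambda_\alpha^{1,1})\le 2P_\alpha(O)$ already used in the proof of Theorem \ref{t2} (it is \cite[Theorem 3.1]{HV}), and it is sharp because $O=\mathbb B^n$ gives equality by the identity above. The equivalence with (\ref{eq3}) is the new content. For (\ref{eq4})$\Rightarrow$(\ref{eq3}): Sard's theorem and (\ref{eq4}) give $\hbox{cap}(\{|f|\ge t\};\dot\Lambda_\alpha^{1,1})\le 2P_\alpha(\{|f|>t\})$ for a.e.\ $t$; combined with $\|f\|_{\dot\Lambda_\alpha^{1,1}}=2\int_0^\infty P_\alpha(\{|f|>t\})\,dt$, it remains to prove the sharp rearrangement inequality
\[
\Big(\int_0^\infty g(t)^\beta\,dt^\beta\Big)^{1/\beta}\le\int_0^\infty g(t)\,dt\qquad(g\ge 0\text{ non-increasing},\ \beta\ge 1),
\]
which I would get from Minkowski's integral inequality applied to $g(t)=\int_0^\infty 1_{\{t<g^\ast(s)\}}\,ds$, $g^\ast(s)=|\{g>s\}|$, using $\bigl\|1_{\{t<a\}}\bigr\|_{L^\beta(dt^\beta)}=a$ and $\int_0^\infty g^\ast=\int_0^\infty g$; applying it to $g(t)=2P_\alpha(\{|f|>t\})$ yields (\ref{eq3}). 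For (\ref{eq3})$\Rightarrow$(\ref{eq4}) I would reuse $f_j\downarrow 1_{\overline O}$ from part (i); the only new point is $\|f_j\|_{\dot\Lambda_\alpha^{1,1}}=2\int_0^1 P_\alpha(\{f_j>t\})\,dt\to 2P_\alpha(O)$, which follows from $\{f_j>t\}\downarrow\overline O$ together with the continuity $P_\alpha(\{f_j>t\})\to P_\alpha(O)$: in $P_\alpha(E)=\int_E\int_{E^c}|x-y|^{-n-\alpha}\,dx\,dy$ the part with $x\in O$ converges by monotone convergence, and the part with $x$ in the shrinking shell $\{f_j>t\}\setminus O$ is of order $(\text{shell width})^{1-\alpha}\to 0$ precisely because $\alpha<1$; since the capacitary side tends to $\hbox{cap}(\overline O;\dot\Lambda_\alpha^{1,1})^\beta$ as in part (i), (\ref{eq4}) follows. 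Sharpness of (\ref{eq3}) then transfers from that of (\ref{eq4}): the equality case of the displayed inequality is a one-step $g$, corresponding to $f=c\,1_{\overline{\mathbb B^n}}\notin C_0^\infty$, so (\ref{eq3}) has no $C_0^\infty$ extremal but the constant $1$ is optimal, as seen by testing on near-minimizers $g_j\ge 1_{\overline{\mathbb B^n}}$ of $\hbox{cap}(\overline{\mathbb B^n};\dot\Lambda_\alpha^{1,1})$, for which both sides of (\ref{eq3}) tend to $2P_\alpha(\mathbb B^n)$.

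I expect the main obstacle to be purely technical, concentrated in two spots. First, the rearrangement inequality $\int_0^\infty g^\beta\,dt^\beta\le\bigl(\int_0^\infty g\bigr)^\beta$ must be proved with constant exactly $1$ together with its equality case, since this pins down the constant in (\ref{eq3}) and explains the absence of a classical extremal there. Second, the two approximation arguments rely on the upper semicontinuity of $\hbox{cap}(\cdot;\dot\Lambda_\alpha^{1,1})$ from Theorem \ref{t1}(iii) together with the mild continuity of $P_\alpha$ along neighborhoods shrinking to a smooth domain, and one must check that replacing $f$ by $|f|$ (equivalently, by $\sqrt{f^2+\epsilon^2}-\epsilon$ and letting $\epsilon\to0$) disturbs neither the co-area identity nor the regularity of a.e.\ level set.
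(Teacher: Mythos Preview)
Your overall strategy matches the paper's: both parts are Maz'ya-type equivalences proved by slicing $f$ via the layer-cake/co-area formulas in one direction and by testing on smooth approximations of $1_{\overline O}$ (with Theorem~\ref{t1}(iii)) in the other; truth comes from Theorem~\ref{t2} and \cite{FS}, and sharpness from the ball. The paper uses the distance-function cutoffs $f_\epsilon$ where you use a generic decreasing sequence $f_j$, and proves the key rearrangement inequality by the differential identity
\[
t^{\frac{\alpha}{n-\alpha}}g(t)^{\frac{n}{n-\alpha}}\le\frac{n-\alpha}{n}\,\frac{d}{dt}\Big(\int_0^t g(s)\,ds\Big)^{\frac{n}{n-\alpha}}
\]
for non-increasing $g$, where you use Minkowski; these are cosmetic differences.

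There is, however, one genuine slip in your (\ref{eq4})$\Rightarrow$(\ref{eq3}). You apply the rearrangement inequality $\bigl(\int_0^\infty g^\beta\,dt^\beta\bigr)^{1/\beta}\le\int_0^\infty g\,dt$ to $g(t)=2P_\alpha(\{|f|>t\})$, but this function is \emph{not} non-increasing in general: fractional perimeter is not monotone under inclusion (think of a function whose superlevel sets merge two components into one as $t$ decreases---the perimeter drops, so $t\mapsto P_\alpha(\{|f|>t\})$ increases there). The inequality you stated genuinely fails without monotonicity. The repair is immediate and is exactly what the paper does: apply the rearrangement inequality instead to $g(t)=\hbox{cap}(\{|f|\ge t\};\dot\Lambda_\alpha^{1,1})$, which \emph{is} non-increasing by Theorem~\ref{t1}(ii), obtaining
\[
\Big(\int_0^\infty \hbox{cap}(\{|f|\ge t\};\dot\Lambda_\alpha^{1,1})^\beta\,dt^\beta\Big)^{1/\beta}\le\int_0^\infty \hbox{cap}(\{|f|\ge t\};\dot\Lambda_\alpha^{1,1})\,dt,
\]
and only \emph{then} invoke (\ref{eq4}) levelwise together with the co-area formula to bound the right side by $2\int_0^\infty P_\alpha(\{|f|>t\})\,dt=\|f\|_{\dot\Lambda_\alpha^{1,1}}$. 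With this reordering your argument goes through.
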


\begin{proof} (i) Suppose (\ref{eq2}) is valid. For any $C_0^\infty$ function $f$, set
$$
O_t(f)=\{x\in\mathbb R^n:\ |f(x)|> t\}\quad\forall\quad t\ge 0.
$$
Then an application of (\ref{eq2}) to $O_t(f)$ yields
$$
\|f\|_{\frac{n}{n-s}}=\left(\int_0^\infty V\big(O_t(f)\big)\,dt^ \frac{n}{n-\alpha}\right)^\frac{n-\alpha}{n}\le\kappa_{n,\alpha}\left(\int_0^\infty\Big(\hbox{cap}\big(\overline{O_t(f)};\dot{\Lambda}_\alpha^{1,1})\big)\Big)^\frac{n}{n-\alpha}\,dt^\frac{n}{n-\alpha}\right)^\frac{n-\alpha}{n},
$$
deriving (\ref{eq1}). Conversely, suppose (\ref{eq1}) is valid. For any bounded domain $O\subset\mathbb R^n$ with $C^\infty$ boundary $\partial O$, the Euclidean distance $\hbox{dist}(x,E)$ of a point $x$ to a set $E$, and $0<\epsilon<1$, let
$$
f_\epsilon(x)=\begin{cases}
1-\epsilon^{-1}\hbox{dist}(x,\overline{O})\quad\hbox{as}\quad \hbox{dist}(x,\overline{O})<\epsilon\\
0\quad\hbox{as}\quad \hbox{dist}(x, \overline{O})\ge\epsilon.
\end{cases}
$$
Then the inequality in (\ref{eq1}) is true for $f_\epsilon$. Consequently, via setting 
$$
O_\epsilon=\{x\in\mathbb R^n:\ \hbox{dist}(x,\overline{O})<\epsilon\}\quad\&\quad
\epsilon\to 0
$$ 
and using Theorem \ref{t1}(iii), we gain
\begin{align*}
&\big(V(O)\big)^\frac{n-\alpha}{n}\\
&\leftarrow\|f_\epsilon\|_{L^{\frac{n}{n-\alpha}}}\\
&\le\kappa_{n,\alpha}
\left(\int_0^\infty \big(\hbox{cap}(\overline{O_t(f_\epsilon)}; \dot{\Lambda}_\alpha^{1,1})\big)^\frac{n}{n-\alpha}\,dt^\frac{n}{n-\alpha}\right)^\frac{n-\alpha}{n}\\
&=\kappa_{n,\alpha}
\left(\int_0^1 \big(\hbox{cap}(\overline{O_t(f_\epsilon)}; \dot{\Lambda}_\alpha^{1,1})\big)^\frac{n}{n-\alpha}\,dt^\frac{n}{n-\alpha}\right)^\frac{n-\alpha}{n}\\
&\le\kappa_{n,\alpha}\hbox{cap}(\overline{O_\epsilon};\dot{\Lambda}_\alpha^{1,1})\\
&\rightarrow \kappa_{n,\alpha} \hbox{cap}(\overline{O}; \dot{\Lambda}_\alpha^{1,1}).
\end{align*}
This proves (\ref{eq2}). Moreover, the truth and the sharpness of (\ref{eq2}) (and hence (\ref{eq1}) via the just-checked equivalence) follow from the definition of $\hbox{cap}(O; \dot{\Lambda}_\alpha^{1,1})$ and the sharp fractional Sobolev inequality on \cite[Theorem 4.1: $p=1$]{FS}:
$$
\|f\|_{L^\frac{n}{n-\alpha}}\le\kappa_{n,\alpha}\|f\|_{\dot{\Lambda}^{1,1}_\alpha}
\quad \forall\ \ f\in C_0^\infty.
$$

(ii) Suppose (\ref{eq3}) is valid. Given $\epsilon\in (0,1)$ and a bounded domain $O\subset\mathbb R^n$ with $C^\infty$ boundary $\partial O$, select again $O_\epsilon\ \&\ f_\epsilon$ as above. Then Theorem \ref{t1}(ii) and the argument for \cite[Lemma 3.2]{HV} give
$$
\hbox{cap}(\overline{O}; \dot{\Lambda}_\alpha^{1,1})\le\left(\int_0^1 \Big(\hbox{cap}\big(\overline{O_t(f_\epsilon)}; \dot{\Lambda}_\alpha^{1,1}\big)\Big)^\frac{n}{n-\alpha}\,dt^\frac{n}{n-\alpha}\right)^\frac{n-\alpha}{n}\\
\le \|f_\epsilon\|_{\dot{\Lambda}^{1,1}_\alpha}\rightarrow \|1_O\|_{\dot{\Lambda}^{1,1}_\alpha}=2P_\alpha(O).
$$
In other words, (\ref{eq4}) is true. Conversely, suppose  (\ref{eq4}) is valid. Upon noticing that for any $C^\infty_0$ function $f$ with $O_t(f)$ being as above, the function 
$t\mapsto \hbox{cap}\big(\overline{O_t(f)};\ \dot{\Lambda}^{1,1}_\alpha\big)$
decreases on $[0,\infty)$ (thanks to Theorem \ref{t1}(ii)), we have
\begin{align*}
&t^{\frac{\alpha}{n-\alpha}}\Big(\hbox{cap}\big(\overline{O_t(f)}; \dot{\Lambda}^{1,1}_\alpha\big)\Big)^\frac{n}{n-\alpha}\\
&=\Big(t \hbox{cap}\big(\overline{O_t(f)};\ \dot{\Lambda}^{1,1}_\alpha\big)\Big)^{\frac{\alpha}{n-\alpha}}\hbox{cap}\big(\overline{O_t(f)};\ \dot{\Lambda}^{1,1}_\alpha\big)\\
&\le \left(\int_0^t\hbox{cap}\big(\overline{O_s(f)};\ \dot{\Lambda}^{1,1}_\alpha\big)\,ds\right)^{\frac{\alpha}{n-\alpha}}\hbox{cap}\big(\overline{O_t(f)};\ \dot{\Lambda}^{1,1}_\alpha\big)\\
&=\Big(\frac{n-\alpha}{n}\Big)\frac{d}{dt}\left(\int_0^t \hbox{cap}\big(\overline{O_s(f)};\ \dot{\Lambda}^{1,1}_\alpha\big)\,ds\right)^{\frac{n}{n-\alpha}},
\end{align*}
whence finding, along with Theorem \ref{t1}(ii), (\ref{eq4}), Theorem \ref{t2} and the previously-cited co-area formula,
$$
\left(\int_0^\infty \Big(\hbox{cap}\big(\overline{O_t(f)}\big);\ \dot{\Lambda}^{1,1}_\alpha\Big)^{\frac{n}{n-\alpha}}\,dt^\frac n{n-\alpha}\right)^\frac{n-\alpha}{n}\le\int_0^\infty\hbox{cap}\big(\overline{O_t(f)};\dot{\Lambda}^{1,1}_\alpha\big)\,dt\le 2\int_0^\infty P_\alpha\big(O_t(f)\big)\,dt=\|f\|_{\dot{\Lambda}_\alpha^{1,1}}.
$$
So, (\ref{eq3}) holds. Moreover, the truth of (\ref{eq4}) (and hence (\ref{eq3}) via the above equivalence) follows from Theorem \ref{t2}. In fact, if (\ref{eq4}) were not sharp, then an application of (\ref{eq2}) would derive that the sharp fractional isoperimetric inequality (cf. \cite[(4.2)]{FS})
$$
\big(V(E)\big)^\frac{n-\alpha}{n}\le 2\kappa_{n,\alpha}P_\alpha(E)\quad\forall\quad E\subset\mathbb R^n
$$
is not sharp, thereby reaching a contradiction. Thus, (\ref{eq4}) is sharp, and so is (\ref{eq3}).
\end{proof}

Theorem \ref{t3} comes actually from splitting both the sharp fractional Sobolev inequality and the fractional isoperimetric inequality whose equivalence (optimizing \cite[Theorem 1.1]{HV} under $G=\mathbb R^n$) is described below.

\begin{thm}\label{t4} The following three optimal statements are equivalent:

{\rm (i)} The fractional Sobolev inequality $\|f\|_{L^\frac{n}{n-\alpha}}\le \kappa_{n,\alpha}\|f\|_{\dot{\Lambda}_{\alpha}^{1,1}}$ holds for any $f\in C_0^\infty$.

{\rm (ii)} The fractional isocapacitary inequality $\big(V(O)\big)^\frac{n-\alpha}{n}\le \kappa_{n,\alpha}\hbox{cap}(\overline{O},\dot{\Lambda}_{\alpha}^{1,1})$ holds for any bounded domain $O\subset\mathbb R^n$ with $C^\infty$ boundary $\partial O$.

{\rm (iii)} The fractional isoperimetric inequality $\big(V(O)\big)^\frac{n-\alpha}{n}\le 2\kappa_{n,\alpha}P_\alpha(O)$ holds for any bounded domain $O\subset\mathbb R^n$ with $C^\infty$ boundary $\partial O$.

\end{thm}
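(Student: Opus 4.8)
The plan is to establish the cycle of implications (i) $\Rightarrow$ (ii) $\Rightarrow$ (iii) $\Rightarrow$ (i), to verify that every arrow preserves the constant $\kappa_{n,\alpha}$, and then to transport optimality around the cycle starting from (i), whose sharpness is already furnished by \cite[Theorem 4.1: $p=1$]{FS}. Most of the work is recycled from Theorems \ref{t1}, \ref{t2}, \ref{t3}; indeed (ii) is nothing but the geometric inequality $(\ref{eq2})$.

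For (i) $\Rightarrow$ (ii) I would argue straight from the definition of $\hbox{cap}(\cdot;\dot{\Lambda}_\alpha^{1,1})$. Given a bounded domain $O$ with $C^\infty$ boundary, $\overline O$ is compact, and every $f\in C_0^\infty$ with $f\ge 1_{\overline O}$ is nonnegative with $f\ge 1$ on $\overline O$, so $\big(V(O)\big)^{\frac{n-\alpha}{n}}=\|1_{\overline O}\|_{L^{\frac{n}{n-\alpha}}}\le\|f\|_{L^{\frac{n}{n-\alpha}}}\le\kappa_{n,\alpha}\|f\|_{\dot{\Lambda}_\alpha^{1,1}}$ by (i); taking the infimum over such $f$ gives (ii). For (ii) $\Rightarrow$ (iii) I would slip the ``easy half'' of Theorem \ref{t2} — namely $\hbox{cap}(\overline O;\dot{\Lambda}_\alpha^{1,1})\le 2P_\alpha(O)$, obtained from Theorem \ref{t1}(ii) and \cite[Theorem 3.1]{HV} — between the two sides, so that $\big(V(O)\big)^{\frac{n-\alpha}{n}}\le\kappa_{n,\alpha}\hbox{cap}(\overline O;\dot{\Lambda}_\alpha^{1,1})\le 2\kappa_{n,\alpha}P_\alpha(O)$, which is (iii).

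The substantial arrow is (iii) $\Rightarrow$ (i). First I would upgrade (iii) from $C^\infty$ bounded domains to arbitrary measurable $E\subset\mathbb R^n$ of finite $\alpha$-perimeter, i.e. to $\big(V(E)\big)^{\frac{n-\alpha}{n}}\le 2\kappa_{n,\alpha}P_\alpha(E)$ for all such $E$ — a routine approximation, and in any case the form recorded in \cite[(4.2)]{FS}; I would deliberately not try to reduce the disconnected case to the connected one, since both $V^{\frac{n-\alpha}{n}}$ and $P_\alpha$ are subadditive under disjoint unions and the naive reduction goes the wrong way. Then, for a fixed $f\in C_0^\infty$ with $O_t(f)=\{x\in\mathbb R^n:\ |f(x)|>t\}$, I would combine: the layer-cake identity $\|f\|_{L^{\frac{n}{n-\alpha}}}^{\frac{n}{n-\alpha}}=\int_0^\infty V\big(O_t(f)\big)\,dt^{\frac{n}{n-\alpha}}$; the generalized co-area formula $\|f\|_{\dot{\Lambda}_\alpha^{1,1}}=2\int_0^\infty P_\alpha\big(O_t(f)\big)\,dt$ of \cite{Vi}, where $O_t(f)$ has $C^\infty$ boundary for a.e. $t>0$ by Sard's theorem; and the elementary monotonicity inequality already used in the proof of Theorem \ref{t3}(ii), that a nonincreasing $g\ge0$ satisfies $\big(\int_0^\infty g^{\frac{n}{n-\alpha}}\,dt^{\frac{n}{n-\alpha}}\big)^{\frac{n-\alpha}{n}}\le\int_0^\infty g\,dt$. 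Applying this last inequality to the nonincreasing function $g(t)=\big(V(O_t(f))\big)^{\frac{n-\alpha}{n}}$ and then invoking the upgraded (iii) yields
\[
\|f\|_{L^{\frac{n}{n-\alpha}}}\le\int_0^\infty\big(V(O_t(f))\big)^{\frac{n-\alpha}{n}}\,dt\le 2\kappa_{n,\alpha}\int_0^\infty P_\alpha\big(O_t(f)\big)\,dt=\kappa_{n,\alpha}\|f\|_{\dot{\Lambda}_\alpha^{1,1}},
\]
which is (i).

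It remains to record optimality: since (i) holds with the best constant $\kappa_{n,\alpha}$ by \cite[Theorem 4.1: $p=1$]{FS}, and each implication in the cycle preserves that constant, neither (ii) nor (iii) can hold with a strictly smaller constant, for otherwise traversing the cycle would sharpen (i); alternatively one may test (ii) and (iii) on dilated balls $r\mathbb B^n$ and use Theorem \ref{t1}(i) to see $\kappa_{n,\alpha}$ is already attained in the limit. I expect the main obstacle to lie in the step (iii) $\Rightarrow$ (i): obtaining the measurable-set form of the fractional isoperimetric inequality cleanly, and correctly threading the generalized co-area formula through the monotone rearrangement inequality while keeping track of the almost-everywhere regularity of the superlevel sets.
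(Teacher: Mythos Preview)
Your proposal is correct and follows essentially the same route as the paper: (i)$\Rightarrow$(ii) straight from the definition of $\hbox{cap}(\overline O;\dot{\Lambda}_\alpha^{1,1})$, (ii)$\Rightarrow$(iii) via Theorem~\ref{t2}, and (iii)$\Rightarrow$(i) by layer-cake plus the monotone inequality $\big(\int_0^\infty g^{\frac{n}{n-\alpha}}\,dt^{\frac{n}{n-\alpha}}\big)^{\frac{n-\alpha}{n}}\le\int_0^\infty g\,dt$ for nonincreasing $g$, followed by the co-area formula. You are in fact more careful than the paper in flagging the connectivity of $O_t(f)$ as the one genuine subtlety in (iii)$\Rightarrow$(i); the paper simply applies (iii) to the superlevel sets without comment, whereas you correctly observe that the naive component-wise reduction goes the wrong way and propose to pass through the measurable-set version of the isoperimetric inequality.
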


\begin{proof} (i)$\Rightarrow$(ii) follows from the definition of $\hbox{cap}(\overline{O},\dot{\Lambda}_{\alpha}^{1,1})$. (ii)$\Rightarrow$(iii) follows from Theorem \ref{t2}. (iii)$\Rightarrow$(i) follows from the idea verifying (\ref{eq4})$\Rightarrow$(\ref{eq3}). As a matter of fact, assume (iii) is true. Given a function $f\in C_0^\infty$ with $O_t(f)$ being the same as in the proof of Theorem \ref{t3}. Obviously, $t\mapsto V(O_t(f))$ is a decreasing function on $[0,\infty)$. This monotonicity, together with the layer-cake formula, the chain rule, (iii) for $O_t(f)$, and the above-used co-area formula, derives
\begin{align*}
&\|f\|_{L^\frac{n}{n-\alpha}}\\
&=\left(\int_0^\infty V\big(O_t(f)\big)\,dt^\frac{n}{n-\alpha}\right)^\frac{n-\alpha}{n}\\
&=\int_0^\infty\frac{d}{dt}\left(\int_0^t V\big(O_s(f)\big)\,ds^\frac{n}{n-\alpha}\right)^\frac{n-\alpha}{n}\,dt\\
&=\int_0^\infty\left(\int_0^t V\big(O_s(f)\big)\,ds^\frac{n}{n-\alpha}\right)^{-\frac{\alpha}{n}}V\big(O_t(f)\big)t^\frac{\alpha}{n-\alpha}\,dt\\
&\le\int_0^\infty \Big(V\big(O_t(f)\big)\Big)^\frac{n-\alpha}{n}\,dt\\
&\le 2\kappa_{n,\alpha}\int_0^\infty P_\alpha\big(O_t(f)\big)\,dt\\
&=\kappa_{n,\alpha}\|f\|_{\dot{\Lambda}^{1,1}_\alpha},
\end{align*}
whence reaching (i).

\end{proof}

Note that for any compact set $K\subset\mathbb R^n$ and any bounded domain $O\subset\mathbb R^n$ one has
$$
\begin{cases}
\lim_{\alpha\to 1}(1-\alpha)^{-1}\kappa_{n,\alpha}=(n\omega_n^\frac{1}{n}\tau_n)^{-1}\\
\lim_{\alpha\to 1}(1-\alpha)\hbox{cap}\big(K;\ \dot{\Lambda}^{1,1}_\alpha\big)=\tau_n \hbox{cap}\big(K;\ \dot{W}^{1,1}\big)=\tau_n\inf_{\tilde{O}\in \mathsf{O}^\infty(K)}P(\tilde{O})\\
\lim_{\alpha\to 1}(1-\alpha)P_\alpha(O)=2^{-1}\tau_n P(O).
\end{cases}
$$
So, the limiting cases of Theorem \ref{t3} and Theorem \ref{t4} as $\alpha\to 1$ reduce to \cite[Theorems 1.1-1.2]{Xiao07} and \cite[Proposition 3.1]{Xiao09} plus the well-known Federer-Felming-Maz'ya equivalence between the isoperimetric inequality and the Sobolev inequality (cf. \cite{FF, M}), respectively.

\bibliographystyle{amsplain}

\end{document}